\documentclass[a4paper,11pt]{article}
\usepackage[cp1251]{inputenc}
\usepackage{amssymb,amsmath,amsthm}
\usepackage{graphics,graphicx,color}

\usepackage{srcltx, bm}
\DeclareGraphicsExtensions{.eps} \textwidth=169mm
\definecolor{darkred}{rgb}{0.9,0.1,0.1}

\newtheorem{theorem}{Theorem}[section]
\newtheorem{lemma}{Lemma}[section]

\newtheorem{remark}{Remark}[section]
\newtheorem{definition}{Definition}[section]{\rm}
\definecolor{darkred}{rgb}{0.9,0.1,0.1}

\definecolor{darkred}{rgb}{0.9,0.1,0.1}

\begin{document}

\title{Negative spectrum of non-local operators with periodic potential \thanks{ The study of
S. A. Pirogov was supported by the Russian Science Foundation (project 24-11-00123, https://rscf.ru/
en/project/24-11-00123/) }}

\author{S. Pirogov\thanks{
Institute for Information Transmission Problems (Kharkevich Institute) of the Russian Academy
of Sciences, Moscow, Russia, and National Research University Higher School of Economics, Moscow,
Russia  (s.a.pirogov@bk.ru) }
\and
\setcounter{footnote}{2}
E. Zhizhina\thanks{
Higher School of Modern Mathematics, Moscow Institute of Physics and Technology
(National Research University), Dolgoprudny, Russia  (elena.jijina@gmail.com)} }

\date{}
\maketitle

\begin{abstract}

The paper deals with spectral analysis of non-local operators arising in population dynamics models. We consider negative periodic perturbations of non-local operators of the convolution type. Such operators describe evolutions of the first correlation function in the stochastic birth and death dynamcis in the presence of suppression forces  that increase mortality. We consider the case when the birth kernel can be non-symmetric and spatially heterogeneous. It has been proven that any negative periodic perturbation of the equilibrium dynamics generator shifts the spectrum to the left half-plane and, consequently, such a perturbation of mortality leads to the population extinction in any dimension.
\\

Keywords: first correlation function, extinction of population, non-local operators, periodic potential, essential and discrete spectrum
\end{abstract}

\hskip 7 cm
\begin{minipage}[]{10 cm}
{\it   In memory of our friend Zhenia Pechersky}
\end{minipage}

\section{Introduction}

The paper deals with the spectral analysis of non local convolution type operators with a periodic non-positive potential of the form
\begin{equation}\label{1}
{\cal L} u(x) = \int\limits_{\mathbb{R}^d} a(x-y)(u(y) - u(x)) dy + V(x) u(x),
\end{equation}
where $a(\cdot) \ge 0, \; a(\cdot) \in L^1(\mathbb{R}^d)$, and the potential $V$ is a bounded periodic non-positive function, which is negative on a set of positive measure:  $V(\cdot) \le 0, \;  V \in L^\infty(\mathbb{R}^d), \;  V(x) = V(x+n), \; n \in \mathbb{Z}^d$.

This work is a natural continuation of our research at \cite{KMPZ}, where we proved the existence of a positive ground state for convolution operators perturbed by a local positive potential. Here, we are interested in the opposite case, when the potential $V$ is non-positive, and we want to answer the question of what negative potential is sufficient to shift the spectrum below 0.

The spectral properties of the convolution operators perturbed by a potential
have been investigated recently in \cite{KMPZ, KMV}, see also \cite{BPZ22, BPZ23a, BPZ23b, BPZ25}.
Such operators appear in the evolution equations for the first correlation function of the stochastic contact model in continuum, i.e. they describe the evolution of the density of particles in the corresponding birth and death infinite-particle system, see  e.g. \cite{FK}, \cite{KKP},  \cite{KPZ}. The contact model has a remarkable property: the equation for the first correlation function is closed and not coupled with the equations for higher order correlation functions. Then the evolution equation for the first correlation function reads
$$
\frac{\partial u(x,t)}{\partial t} = {\cal L} u(x,t), \quad u(x,0) = u_0(x).
$$
Thus, information about the structure of the spectrum of ${\cal L}$, the maximum eigenvalue, and the ground state is very important for predicting the growth or extinction of various populations whose evolution is determined by the mechanism of birth and death specified by the generator.

Unlike our previous works, where the potential was localized in space and the spectrum of operator ${\cal L}$ was studied in the space $L^2(\mathbb{R}^d)$, in the present paper we consider the potential $V(x)$ that is periodic with period 1.
This leads us to an operator acting in the space of periodic functions $u \in L^2(\mathbb{T}^d)$ and essentially changes the structure of the spectrum. In previous models, the essential spectrum was determined by both the convolution operator and the potential and contained 0, see e.g. Theorem 2.1 in \cite{BPZ23a} or Theorem 1 in \cite{BPZ25}.
In this model with a periodic potential, the structure of the essential spectrum changes radically. The essential spectrum is defined now as the range of the potential shifted away from 0,
and the convolution operator in $\mathbb{R}^d$, "wound" on a torus, becomes a compact operator in $L^2(\mathbb{T}^d)$.

The present work is a continuation of our previous study of the contact model in  \cite{PZ25}, where the existence of the stationary measure in the contact model with periodic birth and death rates has been proved. Here we are interested in the question: how strong must a negative external perturbation be to turn an infinite particle system from the stationary regime to the extinction?


\section{Statement of the problem}

In this paper, we consider several operators describing the evolution of the first correlation function in contact models with periodic birth and death rates in the presence of external perturbations.

We start with operator $L$ in  $L^2(\mathbb{T}^d)$ obtained from \eqref{1}:
\begin{equation}\label{L}
L u(x) = \int\limits_{\mathbb{T}^d} \tilde a(x-y)(u(y) - u(x)) dy + V(x) u(x), \quad u \in L^2(\mathbb{T}^d),
\end{equation}
where
\begin{equation}\label{tilde-a}
\tilde a(\cdot) \ge 0, \qquad \tilde a(z) = \sum\limits_{n \in \mathbb{Z}^d} a(z+n), \quad \tilde a(\cdot) \in L^1(\mathbb{T}^d).
\end{equation}
Notice that $\|a\|_{L^1(\mathbb{R}^d)} = \|\tilde a\|_{L^1(\mathbb{T}^d)}$, and in what follows we take $\|a\|_{L^1(\mathbb{R}^d)} = 1$.
\\

 Along with operator $L$, we also consider operator $M$, which has a more general form:
\begin{equation}\label{M}
M u(x) = \int\limits_{\mathbb{T}^d} b(x, y)(u(y) - u(x)) dy + V(x) u(x), \quad u \in L^2(\mathbb{T}^d).
\end{equation}
where
\begin{equation}\label{b-2}
 b(x, y) \ge 0, \quad b \not \equiv 0,  \qquad  0< \gamma_1 := \inf\limits_{x \in \mathbb{T}^d}  \int\limits_{\mathbb{T}^d}  b(x, y) \, dy  \le \sup\limits_{x \in \mathbb{T}^d} \int\limits_{\mathbb{T}^d}  b(x, y) \, dy =: \gamma_2 < \infty;
\end{equation}
\begin{equation}\label{b-2bis}
0< 
\int\limits_{\mathbb{T}^d}  b(x, y) \, dx \le \gamma_3 < \infty \quad \mbox{for all } \; y \in \mathbb{T}^d,
\end{equation}
and assume that a $n$-th iteration of the kernel $b(x,y)$ with some $n \in \mathbb{N}$ is strictly positive:
\begin{equation}\label{b-3}
\exists \ n \in \mathbb{N} \ \mbox{and } \ \exists \beta>0 \quad \mbox{such that} \  \int\limits_{(\mathbb{T}^d)^n}  b(x, y_1) \ldots b (y_n, y) \, dy_1 \ldots dy_n \ge \beta.
\end{equation}
 We also suppose that uniformly in $x$ there is integrability of $b(x,y)$ with respect to $y$, i.e.
\begin{equation}\label{b-compact}
\forall \ \varepsilon>0 \ \exists \ \delta>0 \quad \mbox{such that } \quad \sup\limits_{x} \int\limits_{e}  b(x, y) \, dy < \varepsilon \ \  \mbox{for all } \ e \subset \mathbb{T}^d, \ \mu(e)<\delta.
\end{equation}
\\
{\bf Example.} Bounded kernels $b(x,y) \in L^\infty(\mathbb{T}^{2d})$ or  kernels of the form $b(x,y)=q(x,y) \tilde a(x-y)$ with $\tilde a \in L^1(\mathbb{T}^d), \ q \in L^\infty(\mathbb{T}^{2d})$ satisfy all the above conditions.

\begin{remark}
It is worth noting that we do not assume that the functions $a(x-y)$ and $b(x,y)$ are symmetric, that is, the operators $L$ and $M$ may not be self-adjoint.
\end{remark}

\begin{remark}
The operator $M$ is the result of a "winding" on the torus of a non local convolution type operators with a periodic potential of the form
\begin{equation}\label{1bis}
{\cal M} u(x) = \int\limits_{\mathbb{R}^d} b(x,x-y)(u(y) - u(x)) dy + V(x) u(x),
\end{equation}
where $b(x,z) \ge 0, \; b(x,\cdot) \in L^1(\mathbb{R}^d), \; b(x,z)= b(x+n,z), \; n \in \mathbb{Z}^d $ and the potential $ V \in L^\infty(\mathbb{R}^d), \;  V(x) = V(x+n), \; n \in \mathbb{Z}^d$ is bounded, periodic and $V<0$ on a set of a positive measure.
Similar operators arise when studying stationary regimes of continuous contact models in a periodic medium, see \cite{PZ25}.
\end{remark}

\begin{definition}
An eigenvalue of a bounded linear operator is said to be a maximum eigenvalue, if it has a maximal real part.
\end{definition}


The main result of the present paper is the following theorem.

\begin{theorem}\label{T1}

Consider the operator $M$ acting in $ L^2(\mathbb{T}^d)$ by formula \eqref{M}. We assume that the kernel $b(x,y)$ satisfies all the conditions \eqref{b-2} - \eqref{b-compact}, and
\begin{equation}\label{V}
 V(\cdot) \le 0, \quad V \in L^\infty(\mathbb{T}^d), \quad V(\cdot)<0 \; \mbox{on a set of positive measure}.
\end{equation}

Then the spectrum of the operator $M$
lies in the negative half-space. The essential spectrum $\sigma_{ess}(M)$ of $M$ coincides with the essential range of the function $V(x) - W(x)$, where
$W(x) = \int\limits_{\mathbb{T}^d}  b(x, y) \, dy$:
\begin{equation}\label{essspec}
\sigma_{\rm ess}(M) = {\mathfrak{ Ran}}\, (V(x) -W(x)) \subset [-\alpha_0, -\alpha_1],
\end{equation}
where
\begin{equation}\label{essspec-b}
 \alpha_0 = {\rm ess} \,\sup (W(x)-V(x))\ge \gamma_2, \qquad \alpha_1 = {\rm ess} \, \inf (W(x)-V(x))\ge \gamma_1.
\end{equation}

The discrete spectrum $\sigma_{disc}(M)$ is not empty, $M$ has the maximum eigenvalue $\lambda, \ -\alpha_1 <\lambda<0 $, which is 
real and negative for any $V$ satisfying \eqref{V}.

\end{theorem}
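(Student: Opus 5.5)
We write $M = B + (V - W)$, where $Bu(x) = \int_{\mathbb{T}^d} b(x,y)u(y)\,dy$ and $V-W$ acts as a multiplication operator. The plan is to show that $B$ is compact, read off the essential spectrum from the multiplication part, and then use positivity (Krein--Rutman/Perron--Frobenius ideas) to locate and characterize the maximum eigenvalue.

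\textbf{Compactness of $B$ and the essential spectrum.} By Schur's test, \eqref{b-2} and \eqref{b-2bis} give $\|B\|_{L^2\to L^2}\le\sqrt{\gamma_2\gamma_3}$. To prove compactness, truncate the kernel: set $b_N = b\,\mathbf 1_{\{b\le N\}}$. Then $b_N$ is bounded, so $B_N$ is Hilbert--Schmidt. For the remainder, the measure of $\{y: b(x,y)>N\}$ is at most $\gamma_2/N$ uniformly in $x$. Condition \eqref{b-compact} then gives $\sup_x\int (b-b_N)(x,y)\,dy\to 0$, while the column integrals of $b-b_N$ stay bounded by $\gamma_3$. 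Schur's test therefore yields $\|B-B_N\|\le (\varepsilon_N\gamma_3)^{1/2}\to0$, so $B$ is compact. Since the essential spectrum is stable under compact perturbations, $\sigma_{\rm ess}(M)=\sigma(V-W)={\mathfrak{Ran}}(V-W)$. The bounds in \eqref{essspec}--\eqref{essspec-b} follow from $V\le0$ and $\gamma_1\le W\le\gamma_2$. Outside $[-\alpha_0,-\alpha_1]$, the spectrum of $M$ then consists of isolated eigenvalues of finite multiplicity, by analytic Fredholm theory applied to $(V-W-z)^{-1}B$.

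\textbf{Existence of a real maximal eigenvalue $\lambda>-\alpha_1$.} Fix $z>-\alpha_1$ real. Then $R_z=(z+W-V)^{-1}\ge 0$ is bounded, and $(M-z)u=0$ is equivalent to $u=K_z u$ with $K_z=R_zB$, a compact positive operator. Its spectral radius $r(z)$ is continuous and strictly decreasing in $z$. It tends to $0$ as $z\to\infty$. As $z\downarrow-\alpha_1$, $r(z)\to\infty$: near the essential infimum of $W-V$ the factor $R_z$ blows up on a set of positive measure, and the positivity of the iterated kernel \eqref{b-3} forces the norm of $K_z^{n+1}$ to blow up as well. Hence there is a unique $\lambda>-\alpha_1$ with $r(\lambda)=1$. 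By Krein--Rutman, $K_\lambda$ has a nonnegative eigenfunction, which gives an eigenfunction of $M$ with eigenvalue $\lambda$. Condition \eqref{b-3} makes $K_\lambda$ irreducible (in fact $K_\lambda^{n+1}$ has a kernel bounded below by a positive constant), so $u>0$ a.e. and $\lambda$ is simple. For any other eigenvalue $\mu$ with eigenfunction $v$, the inequality $|v|\le R_{{\rm Re}\,\mu}B|v|$ holds pointwise. A comparison argument using the positive eigenvector of the adjoint operator $K_{{\rm Re}\,\mu}^*$ then gives $r({\rm Re}\,\mu)\ge1$, hence ${\rm Re}\,\mu\le\lambda$. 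So $\lambda$ is the maximum eigenvalue, and the discrete spectrum is nonempty.

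\textbf{Negativity $\lambda<0$.} This is the key step. Since $B$ need not be symmetric, I would use the adjoint positive eigenfunction rather than a quadratic form. Let $\varphi\ge0$, $\varphi\not\equiv0$, satisfy $M^*\varphi=\lambda\varphi$; it exists by the same Krein--Rutman argument applied to $M^* = B^*+(V-W)$, and by irreducibility $\varphi>0$ a.e. Pair it with the constant $\mathbf 1$, using $M\mathbf 1 = V$:
\[
\lambda\langle \varphi,\mathbf 1\rangle=\langle M^*\varphi,\mathbf 1\rangle=\langle\varphi,M\mathbf 1\rangle=\int_{\mathbb{T}^d}V\varphi\,dx<0,
\]
because $V<0$ on a set of positive measure and $\varphi>0$ a.e. Since $\langle\varphi,\mathbf 1\rangle>0$, this gives $\lambda<0$. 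Every other point of the spectrum, whether essential or discrete, has real part at most $\max(\lambda,-\alpha_1)<0$, so the whole spectrum lies in the left half-plane.

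The main obstacle I expect is the limit $r(z)\to\infty$ as $z\downarrow-\alpha_1$. If this fails, for instance when $\{W-V\text{ near }\alpha_1\}$ is thin, a real $\lambda>-\alpha_1$ might not exist. I would first try to prove it by testing $K_z^{n+1}$ against indicators of the level sets $\{W-V<\alpha_1+\delta\}$ and using \eqref{b-3}. I would also check carefully that the strict inequality $\lambda>-\alpha_1$, as claimed in the statement, really follows from this.
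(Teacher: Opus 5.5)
\textbf{The step that fails.} Apart from one step, your plan follows the paper: truncation plus Schur for the compactness of $B$, Weyl for $\sigma_{\rm ess}$, the positive compact operators $K_z=R_zB$ (the paper's $Q_\mu$), Krein--Rutman, and negativity by pairing the positive adjoint eigenfunction with $\mathbf 1$. Your comparison $|v|\le K_{{\rm Re}\,\mu}|v|$ is a more elementary replacement for the paper's appeal to the Edmunds--Potter--Stuart theorem.

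The gap is exactly the step you flagged. The claim $r(z)\to\infty$ as $z\downarrow-\alpha_1$ is false in general, and so is the weaker claim $\lim_{z\downarrow-\alpha_1}r(z)>1$. Take $d=1$ and $b\equiv1$, so $W\equiv1$ and \eqref{b-2}--\eqref{b-compact} hold with $n=1$, $\beta=1$. Take $V(x)=-3|x|^{1/2}$ on $[-\frac12,\frac12)$, extended periodically. Then $\alpha_1=1$, and $K_zu=(1+z+3|x|^{1/2})^{-1}\int u\,dy$ has rank one, with
\[
r(z)=\int_{-1/2}^{1/2}\frac{dx}{1+z+3|x|^{1/2}}\ <\ \int_{-1/2}^{1/2}\frac{dx}{3|x|^{1/2}}=\frac{2\sqrt2}{3}<1\qquad\mbox{for all } z>-1 .
\]
So no $\lambda$ satisfies $r(\lambda)=1$. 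By your own comparison inequality, $M$ then has no eigenvalue with real part $>-\alpha_1$.

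In fact $Mu=\lambda u$ reduces to $\int(1+\lambda-V)^{-1}dx=1$, which has no solution off $[-1-3/\sqrt2,-1]$:
\begin{itemize}
\item for real $\lambda>-1$ the left side is $<1$;
\item for real $\lambda<-1-3/\sqrt2$ it is negative;
\item for non-real $\lambda$ it has nonzero imaginary part.
\end{itemize}
Hence $\sigma_{\rm disc}(M)=\emptyset$. Your heuristic fails because $R_z$ is large only on the shrinking sets $\{W-V<\alpha_1+\delta\}$. What controls $r(z)$ is an integrated quantity like $\int R_z\,dx$, and that can stay bounded.

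\textbf{The paper has the same hole, and what can be rescued.} In \eqref{L3-2} one has $\int\varphi_\delta=|B_\delta|^{1/2}$, so the correct lower bound is $\beta|B_\delta|\,[(\alpha_1+\delta+\mu)(\alpha_0+\mu)^{n-1}]^{-1}\varphi_\delta$. The paper drops the factor $|B_\delta|$. In the example $|B_\delta|=2\delta^2/9$, so $\beta|B_\delta|/\delta\to0$.

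So the last assertion of the theorem (nonempty discrete spectrum, maximal eigenvalue in $(-\alpha_1,0)$) cannot be proved under the stated hypotheses; it needs an extra assumption. Your test-function idea supplies one. The kernel of $K_z^{n+1}$ dominates $\beta(\alpha_0+z)^{-n}R_z(x)$, hence $r(z)^{n+1}\ge\beta(\alpha_0+z)^{-n}\int R_z\,dx$. This tends to $\infty$ whenever $\int(W-V-\alpha_1)^{-1}dx=\infty$, for instance if $W-V$ attains its essential infimum on a set of positive measure.

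You should also decouple negativity from the existence of $\lambda$. Either $r(z)<1$ for all $z>-\alpha_1$, and then there is no eigenvalue with real part $>-\alpha_1$. Or $r(\lambda)=1$ for a unique $\lambda$, and your pairing with $\mathbf 1$ (using $M\mathbf 1=V$) gives $\lambda<0$. Combined with $\sigma_{\rm ess}(M)\subset[-\alpha_0,-\alpha_1]$, this proves the first two assertions of the theorem without the extra hypothesis.
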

For the proof of Theorem \ref{T1}, see section \ref{Th1-proof}.
\\

Thus, we conclude that any negative {\bf periodic} perturbation $V(x)$ of the equilibrium dynamics generator
$\int\limits_{\mathbb{R}^d} b(x,x-y)(u(y) - u(x)) dy$ with periodic in $x$ kernel $b(x,x-y)$
leads to the extinction of the evolving population in any dimension.
\medskip


In section \ref{S2}, we obtain estimates on the spectral gap, i.e. on the distance between 0 and the spectrum of the operator $L$. It will be shown that the gap depends on the potential $V$ and some characteristics of the kernel $\tilde a(\cdot)$, see Theorem \ref{T2}.

\begin{theorem}\label{T2}
Let $L$ be the operator given by \eqref{L} whose kernel $\tilde a(\cdot)$ is defined in \eqref{tilde-a}.
Then for any $ V(\cdot) \le 0, \;  V \in L^\infty(\mathbb{T}^d)$, such that $V(\cdot)$ is negative on a set of positive measure, the maximum eigenvalue $\lambda$ of $L$, 
which is negative by Theorem \ref{T1}, admits the following bound
\begin{equation}\label{t2}
\lambda< - \min \big\{ c_2 \gamma_0^2, \ \frac12 c_1  \big\},
\end{equation}
where $c_1 = \|V\|_{L^1(\mathbb{T}^d)}, \  \gamma_0 = \frac29 \, \frac{c_1}{\|V\|_{L^2(\mathbb{T}^d)}}$, and
$$-c_2 = \max\limits_{k \in \mathbb{Z}^d, \, k \neq 0} |a_k| \,- \, 1<0,$$
where $a_k$ are the Fourier coefficients of the function $\tilde a(\cdot)$.

\end{theorem}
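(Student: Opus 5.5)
The plan is a quadratic-form estimate on the eigenfunction, splitting it into its mean and its oscillating part. By Theorem \ref{T1}, $L$ has a real maximal eigenvalue $\lambda<0$. Let $u\in L^2(\mathbb{T}^d)$ be a corresponding eigenfunction, normalized by $\|u\|_{L^2(\mathbb{T}^d)}=1$. Taking the real part of the inner product of $Lu=\lambda u$ with $u$ gives
$$
\lambda=\mathrm{Re}\,(Lu,u)=\mathrm{Re}\,\big((A-I)u,u\big)+\int_{\mathbb{T}^d}V(x)|u(x)|^2\,dx ,
$$
where $Au=\tilde a*u$ on the torus. Here we use $\|\tilde a\|_{L^1(\mathbb{T}^d)}=1$, so that $\int\tilde a(x-y)\,dy\,u(x)=u(x)$.

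Next we expand $u$ in Fourier series as $u=c+w$, where $c=u_0$ is the mean of $u$ and $w=\sum_{k\neq0}u_ke^{2\pi i k\cdot x}$. Since the torus has measure one, Parseval gives $|c|^2+\|w\|_2^2=1$. The operator $A$ is diagonal with multipliers $a_k$, and $a_0=1$. Hence
$$
\mathrm{Re}\,((A-I)u,u)=\sum_{k\ne0}(\mathrm{Re}\,a_k-1)|u_k|^2\le-c_2\|w\|_2^2 ,
$$
using $\mathrm{Re}\,a_k\le|a_k|$. This does not require $\tilde a$ to be symmetric. The potential term is always $\le0$. Therefore
$$
\lambda\le -c_2\|w\|_2^2+\int V|c+w|^2\,dx .
$$

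We then split into two cases according to the size of $\|w\|_2$.

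\emph{Case $\|w\|_2\ge\gamma_0$.} Here $\lambda\le-c_2\gamma_0^2$. To make the inequality strict, we note that $\int V|u|^2<0$. This holds because $u$ cannot vanish on the set $\{V<0\}$, which has positive measure: if it did, $\int V|u|^2=0$ would force $\lambda\ge-c_2\|w\|^2\ge\dots$. More simply, if $\int V|u|^2=0$ then $u$ would also be an eigenfunction of $A-I$ for $\lambda$. Alternatively, one can use the positivity of the ground state furnished in the proof of Theorem \ref{T1}.

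\emph{Case $\|w\|_2<\gamma_0$.} We bound the potential term from below:
$$
\int(-V)|c+w|^2\ge |c|^2c_1-2|c|\int|V||w|\ge |c|^2c_1-2|c|\,\|V\|_2\|w\|_2>|c|^2c_1-\tfrac49|c|c_1 ,
$$
where the last step uses the choice $\gamma_0=\frac29c_1/\|V\|_2$. On the unit torus $\|V\|_1\le\|V\|_2$, so $\gamma_0\le\frac29$. Consequently $|c|^2\ge1-\frac4{81}$, that is, $|c|\ge0.97$. Since $t^2-\frac49t$ is increasing for $t\ge\frac29$, we get $|c|^2-\frac49|c|>0.51>\frac12$. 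This yields $\lambda<-\frac12c_1$.

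Combining the two cases gives \eqref{t2}. The main point requiring care is the strictness in the first case, and the constants in the second case. The constant $\frac29$ is evidently tuned so that the mean $c$ dominates the cross term $2|c|\int|V||w|$ by a margin exceeding $\frac12$.
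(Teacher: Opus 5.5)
Your proposal is correct and follows essentially the paper's argument: you split off the mean, bound $\mathrm{Re}((A-I)u,u)\le -c_2\|w\|^2$ via the Fourier coefficients, and treat the cases $\|w\|\ge\gamma_0$ and $\|w\|<\gamma_0$ separately. There are two minor differences: the paper first proves $\mathrm{Re}(Lf,f)\le-\kappa\|f\|^2$ for all $f$ and then applies it to $\psi$, and your estimate $|c|^2-\frac49|c|>0.51$ replaces the paper's threshold $\tilde\gamma_0=2c_1/(c_1+8\|V\|)$.

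On the strict inequality in the case $\|w\|\ge\gamma_0$: only your third justification, taking $u$ to be the strictly positive ground state from Theorem~\ref{T1} so that $\int V|u|^2<0$, closes the argument as written. The first two are incomplete. Since the paper itself only concludes $\lambda\le-\kappa$, your treatment of strictness is more careful than the paper's.
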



\section{Proof of Theorem \ref{T1}}\label{Th1-proof}

Let us denote by $B$ the operator
\begin{equation}\label{B}
B u(x) = \int\limits_{\mathbb{T}^d} b(x,y) u(y) dy,  \quad u \in L^2(\mathbb{T}^d).
\end{equation}

\begin{lemma}
The operator $B$ defined by \eqref{B} is a compact operator in $L^2(\mathbb{T}^d)$.
\end{lemma}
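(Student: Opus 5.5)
We approximate $B$ in operator norm by compact operators, using a truncation of the kernel. First, $B$ is bounded on $L^2(\mathbb{T}^d)$ by the Schur test: by \eqref{b-2} the row integrals satisfy $\sup_x\int b(x,y)\,dy\le\gamma_2$, and by \eqref{b-2bis} the column integrals satisfy $\sup_y\int b(x,y)\,dx\le\gamma_3$. Hence $\|B\|\le\sqrt{\gamma_2\gamma_3}$. More generally, for any measurable kernel $0\le k\le b$, the integral operator $K$ with kernel $k$ satisfies
$$\|K\|\le \Big(\sup_x\int k(x,y)\,dy\Big)^{1/2}\Big(\sup_y\int k(x,y)\,dx\Big)^{1/2}.$$
Since $\sup_y\int k\,dx\le\gamma_3$ for every such $k$, it suffices to make the row integral small.

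Fix $N>0$ and split $b=b_N+r_N$, where $b_N=b\,\mathbf 1_{\{b\le N\}}$ and $r_N=b\,\mathbf 1_{\{b>N\}}$. The kernel $b_N$ is bounded on the finite-measure space $\mathbb{T}^{2d}$, so it lies in $L^2(\mathbb{T}^{2d})$. Its operator $B_N$ is therefore Hilbert--Schmidt, hence compact. By the Schur bound above,
$$\|B-B_N\|\le \Big(\gamma_3\,\sup_x\int_{e_x^N} b(x,y)\,dy\Big)^{1/2},\qquad e_x^N=\{y:\ b(x,y)>N\}.$$

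The key step is to show that $\sup_x\int_{e_x^N}b(x,y)\,dy\to0$ as $N\to\infty$, and this is where the uniform integrability condition \eqref{b-compact} enters. By Chebyshev's inequality, $\mu(e_x^N)\le \frac1N\int b(x,y)\,dy\le \gamma_2/N$, uniformly in $x$. Given $\varepsilon>0$, take $\delta$ from \eqref{b-compact} and choose $N>\gamma_2/\delta$. Then $\mu(e_x^N)<\delta$ for every $x$, so $\int_{e_x^N}b(x,y)\,dy<\varepsilon$ for every $x$. Consequently $\|B-B_N\|\le\sqrt{\gamma_3\varepsilon}$.

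Thus $B$ is a norm limit of compact operators and is itself compact. The only delicate point is measurability of the sets $e_x^N$ for a.e.\ $x$, which follows from Fubini's theorem since $b$ is measurable on $\mathbb{T}^{2d}$. Note that the bound needs to hold only for a.e.\ $x$ in the Schur test, so statements holding for almost every $x$ are enough.
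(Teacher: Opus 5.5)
Your proposal is correct and follows essentially the same route as the paper: truncate the kernel at level $N$, use the Markov/Chebyshev bound $\mu(\{y: b(x,y)>N\})\le \gamma_2/N$ together with the uniform integrability condition \eqref{b-compact} to make the row integrals of the remainder uniformly small, and conclude via a Schur-type estimate with the column bound $\gamma_3$ that $\|B-B_N\|\to 0$. The only differences are cosmetic: the paper truncates by $\min\{b,n\}$ instead of $b\,\mathbf 1_{\{b\le N\}}$, and it writes out the Cauchy--Schwarz/Fubini estimate explicitly rather than quoting the Schur test.
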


\begin{proof}
First, note that $B$ is a bounded operator in $L^2(\mathbb{T}^d)$ according to the Schur test:
\begin{equation}\label{T1-0}
\|B\|_{L^2(\mathbb{T}^d) } \le  \sqrt{ \sup \limits_x \int\limits_{\mathbb{T}^d}  b(x, y) \, dy \cdot \sup \limits_y \int\limits_{\mathbb{T}^d}  b(x, y) \, dx }  \le \sqrt{\gamma_2 \, \gamma_3}.
\end{equation}

To prove the compactness of $B$ one can use the approximation $B_n$ of $B$ by the operators with truncated kernels
$$
b_n(x,y) =  \min \{ b(x,y), \ n  \}, \quad n \in \mathbb{N}.
$$
The kernels $b_n(x,y)$ are bounded for any $n$, and the operators $$B_n u(x) = \int\limits_{\mathbb{T}^d} b_n(x,y) u(y) dy$$ are compact operators in $L^2(\mathbb{T}^d)$.

Let us denote $$\theta_n(x) = \int\limits_{\mathbb{T}^d} (b(x,y) - b_n(x,y)) dy.$$  Then $\theta_n(x)$ is a bounded non-negative decreasing sequence, and $\theta_n(x) \to 0 $ by the Lebesgue's theorem. Moreover, from the Markov inequality and condition \eqref{b-2} it follows that for any $\delta>0$ and all $n> \frac{\gamma_2}{\delta}$
$$
\mu \big( y: \ b(x,y)>n \big) \ \le \ \frac{\gamma_2}{n}\ <  \ \delta,
$$
and then condition  \eqref{b-compact}  implies that for any $\varepsilon>0$ there exists $\delta>0$ such that
$$
\sup\limits_x \theta_n(x) < \sup\limits_x \int\limits_{y: \ b(x,y)>n} b(x,y) dy <  \varepsilon \quad \mbox{ for all } \quad  n > \frac{\gamma_2}{\delta},
$$
since $\mu ( y: \ b(x,y)>n )<\delta$. Consequently,
\begin{equation}\label{T1-1}
\theta_n(x) \to 0 \quad \mbox{uniformely in } \, x, \ \mbox{i.e. } \
\lim\limits_{n \to \infty} \sup\limits_x \theta_n(x) = 0.
\end{equation}
Now, using relation \eqref{T1-1} we will prove that
\begin{equation}\label{T1-1bis}
\|B-B_n \|_{L^2(\mathbb{T}^d) \to L^2(\mathbb{T}^d)} \to 0 \quad  \mbox{as } \quad n \to \infty.
\end{equation}
The Cauchy-Schwartz inequality yields:
\begin{equation}\label{T1-2}
\begin{array}{c}
\displaystyle
(B - B_n) f(x) = \int\limits_{\mathbb{T}^d} (b(x,y) - b_n(x,y)) f(y) dy
\\[2mm] \displaystyle
 \le \Big(  \int\limits_{\mathbb{T}^d} (b(x,y) - b_n(x,y)) dy \Big)^{1/2} \Big(  \int\limits_{\mathbb{T}^d} (b(x,y) - b_n(x,y)) f^2(y) dy \Big)^{1/2}.
 \end{array}
\end{equation}
Consequently,
\begin{equation}\label{T1-3}
\begin{array}{l}
\displaystyle
\int\limits_{\mathbb{T}^d}\big((B-B_n) f(x)\big)^2  dx \ \le \  \int\limits_{\mathbb{T}^d} \big[  \int\limits_{\mathbb{T}^d} (b(x,y) - b_n(x,y)) dy  \big] \big[  \int\limits_{\mathbb{T}^d} (b(x,y) - b_n(x,y))f^2(y) dy  \big] \, dx \\[2mm]
\displaystyle
= \int\limits_{\mathbb{T}^d} \theta_n(x)  \int\limits_{\mathbb{T}^d} (b(x,y) - b_n(x,y))f^2(y) dy \, dx
= \int\limits_{\mathbb{T}^d}  f^2(y)  \int\limits_{\mathbb{T}^d} \theta_n(x)  (b(x,y) - b_n(x,y)) dx \, dy
 \\[2mm]
\displaystyle
\le   \sup\limits_y  \int\limits_{\mathbb{T}^d}   \theta_n(x)  (b(x,y) - b_n(x,y))  dx \ \int\limits_{\mathbb{T}^d}  f^2(y) dy \le  \|f\|^2 \, \sup\limits_y  \int\limits_{\mathbb{T}^d}   \theta_n(x)  b(x,y)  dx.
\end{array}
\end{equation}
Here we used Fubini's theorem. Due to \eqref{b-2bis} and \eqref{T1-1} we get
\begin{equation}\label{T1-4}
 \sup\limits_y  \int\limits_{\mathbb{T}^d}   \theta_n(x)  b(x,y)  dx \le  \sup\limits_x  \theta_n(x) \ \sup\limits_y \int\limits_{\mathbb{T}^d} b(x,y) dx \to 0, \quad n \to \infty.
\end{equation}
Consequently, \eqref{T1-1bis} holds, and $B$ is a compact operator.
\end{proof}

Thus the operator $M$ given by \eqref{M} is written as follows:
\begin{equation}\label{M-1}
M u(x) = B u(x) + \big( V(x)-W(x) \big) \, u(x), \quad W(x) =  \int\limits_{\mathbb{T}^d} b(x,y) dy, \quad u \in L^2(\mathbb{T}^d),
\end{equation}
and it is the sum of the operator of multiplication by the real-valued function $V-W$ and the compact operator $B$. Consequently, $\sigma(M) = \sigma_{\rm ess}(M) \cup \sigma_{\rm disc}(M) $, where the essential spectrum $\sigma_{\rm ess}(M)$ coincides with the essential range of the function $(V(x) -W(x))$,
and the discrete spectrum lies in $\mathbb{C} \setminus {\mathfrak{ Ran}}\, (V(x) -W(x))$. The essential range of the function $(V(x) -W(x))$ lies in the negative half-plane: ${\mathfrak{ Ran}}\, (V(x) -W(x)) \subset [-\alpha_0, \, - \alpha_1]$, where $\alpha_0 \ge \gamma_2, \  \alpha_1 \ge \gamma_1$.
\medskip

Next we will prove that the discrete spectrum of the operator $M$ lies in the negative half-plane.
\medskip


By adding to the operator $M$ a positive operator $kI$ with a proper positive constant $k>0$, we obtain a positive operator $T= M + kI$.
The structure of the spectra of the operators $M$ and $T$ is the similar, the spectra differ only in a shift to the right by $k$.
Denote by $\sigma(T)$ and $\sigma_{ess}(T)$ the spectrum and the essential spectrum of $T$ respectively, and let $r_e(T) = \sup \{|\mu|: \ \mu \in \sigma_{ess}(T) \}$.

Next we will need the following statement.

\begin{theorem}[Theorem 1, \cite{S}]\label{TS}
Let $K$ be a reproducing cone in a real Banach space $X$, and $T: X \to X$
be a bounded, positive linear operator. Suppose there exists $\mu \in \sigma (T)$ such that
$|\mu | > r_e(T)$. Then $T$ has a positive eigenvalue $\rho$ such that $\rho \ge |\lambda|$ for all $\lambda \in \sigma(T)$ ,
and to $\rho$ there corresponds at least one eigenvector $v \in K$ of $T$ and at least one eigenvector
$v^* \in K^*$ of the adjoint operator $T^*$.
\end{theorem}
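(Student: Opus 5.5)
The plan is to follow the classical Krein--Rutman/Pringsheim route. The three ingredients are positivity of the Neumann series of the resolvent, the fact that the part of $\sigma(T)$ outside the disc of radius $r_e(T)$ consists of poles, and the reproducing property of $K$, which turns scalar estimates on $K\times K^*$ into operator estimates. I assume, as is standard, that $K$ is closed.

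Let $r=r(T)$ be the spectral radius. By hypothesis $r\ge|\mu|>r_e(T)$. The set $\sigma(T)\cap\{|\lambda|>r_e(T)\}$ consists of isolated eigenvalues of finite algebraic multiplicity, and these are poles of $R(\lambda)=(\lambda-T)^{-1}$. Since this set cannot accumulate inside $\{|\lambda|>r_e(T)\}$, the circle $|\lambda|=r$ carries only finitely many such poles, and at least one. Let $m\ge1$ be the maximal order of the poles on this circle, attained at some $\lambda_0$ with $|\lambda_0|=r$.

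The key comparison uses the expansion $R(\lambda)=\sum_{n\ge0}T^n\lambda^{-n-1}$ for $|\lambda|>r$. For $x\in K$ and $f\in K^*$ the coefficients $\langle f,T^nx\rangle$ are nonnegative. Hence
\[
|\langle f,R(\lambda)x\rangle|\le\langle f,R(|\lambda|)x\rangle \qquad (|\lambda|>r).
\]
Because $K$ is closed and reproducing, the Krein--\v{S}mulian theorem gives a constant $C$ such that every $x$ can be written as $x=u-w$ with $u,w\in K$ and $\|u\|+\|w\|\le C\|x\|$. Dually, every functional is a difference of elements of $K^*$ with controlled norms. Combining this decomposition with the Banach--Steinhaus theorem, the scalar inequality yields
\[
\|R(\lambda)\|\le C'\,\|R(|\lambda|)\| .
\]
Now put $\lambda=\lambda_0 t/r$ with $t\downarrow r$. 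The left side grows like $(t-r)^{-m}$, so $\|R(t)\|$ blows up at least at the same rate. Therefore $r\in\sigma(T)$, and $r$ is a pole of order at least $m$, hence exactly $m$. Since $r\ge|\lambda|$ for every $\lambda\in\sigma(T)$, $\rho:=r$ is the required eigenvalue. I expect this step, transferring the blow-up from $\lambda_0$ to the positive real point $r$ through the cone, to be the main technical obstacle. The delicate points are the uniform constants coming from the reproducing property, and keeping the argument valid when $K$ is not normal.

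Finally I will construct the eigenvectors. Set
\[
Q=\lim_{t\downarrow r}(t-r)^mR(t),
\]
the leading Laurent coefficient at $r$. This operator is nonzero, and it is positive, being a norm limit of positive operators $(t-r)^m\sum_n T^n t^{-n-1}$ with $K$ closed. From the Laurent expansion, $(T-r)Q=0$, so every nonzero vector in the range of $Q$ is an eigenvector for $\rho=r$. Since $K$ is reproducing and $Q\ne0$, there is some $x\in K$ with $Qx\ne0$, and $v=Qx\in K$ is the desired eigenvector of $T$.

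For the adjoint I argue the same way. The adjoint $Q^*$ is nonzero, it maps $K^*$ into $K^*$, and it satisfies $(T^*-r)Q^*=0$. The dual cone $K^*$ spans $X^*$, again by Krein--\v{S}mulian applied to the reproducing cone $K$. Hence some $f\in K^*$ has $Q^*f\ne0$, and $v^*=Q^*f\in K^*$ is an eigenvector of $T^*$ for $\rho$.
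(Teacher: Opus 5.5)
Your overall route is the right one: spectrum outside the disc of radius $r_e(T)$ consists of poles, you compare $|\langle f,R(\lambda)x\rangle|\le\langle f,R(|\lambda|)x\rangle$, and you use positivity of the leading Laurent coefficient at $r$. The paper only quotes this result from Edmunds--Potter--Stuart and gives no proof, so your argument has to stand on its own.

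\textbf{The gap.} The argument fails at the step you yourself flag as delicate. You claim that, because $K$ is reproducing, every functional is a difference of elements of $K^*$ with controlled norms. At the end you claim that $K^*$ spans $X^*$. This is the wrong half of the Krein--Grosberg duality. For a closed cone, $K$ reproducing is equivalent to $K^*$ being \emph{normal}, while $K^*$ reproducing is equivalent to $K$ being normal. Normality is not assumed. For example, take $X=C^1[0,1]$ with the cone of nonnegative functions; this $K$ is closed and reproducing. Every $\varphi\in K^*$ satisfies $|\varphi(g)|\le\varphi(1)\|g\|_\infty$, so $K^*-K^*$ contains only sup-norm continuous functionals and misses $g\mapsto g'(0)$. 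Without the dual decomposition, your scalar inequality yields only the order bounds $-R(|\lambda|)x\le{\rm Re}\,(e^{i\theta}R(\lambda)x)\le R(|\lambda|)x$ for $x\in K$. In a non-normal cone such bounds give no norm control. So $\|R(\lambda)\|\le C'\|R(|\lambda|)\|$ is not established, and your argument for the existence of $f\in K^*$ with $Q^*f\neq0$ fails for the same reason.

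\textbf{The repair.} Both places can be fixed by staying at the scalar level. You only need that $K^*$ separates the points of $X$, which holds by Hahn--Banach because $K$ is closed and $K\cap(-K)=\{0\}$.
\begin{enumerate}
\item Let $N\neq0$ be the coefficient of $(\lambda-\lambda_0)^{-m}$ in the Laurent expansion of $R(\lambda)$ at $\lambda_0$.
\item Since $K$ spans $X$, and hence the complexification, there is $x\in K$ with $Nx=y_1+iy_2\neq0$. By separation there is $f\in K^*$ with $\langle f,Nx\rangle\neq0$.
\item Then $g(\lambda)=\langle f,R(\lambda)x\rangle$ has a pole of order exactly $m$ at $\lambda_0$. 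The inequality $|g(\lambda_0t/r)|\le g(t)$ gives $g(t)\ge c\,(t-r)^{-m}$ as $t\downarrow r$.
\item Hence $r\in\sigma(T)$. Since $r>r_e(T)$, $r$ is a pole, and its order is exactly $m$. Your construction of $Q$ and of $v=Qx\in K$ then goes through unchanged.
\item For the adjoint, choose $f\in K^*$ with $\langle f,v\rangle\neq0$. Then $\langle Q^*f,x\rangle=\langle f,Qx\rangle\neq0$, so $v^*=Q^*f\in K^*$ is the required eigenvector of $T^*$.
\end{enumerate}
In this form the norm-controlled decomposition of $K$ is never needed; reproducibility is used only to choose $x\in K$.
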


This theorem says that if the operator $T = M+kI$ has an eigenvalue $\rho$ of maximum modulus, such that $|\rho|> r_e(T) = -\alpha_1+k$, then this eigenvalue is positive (and, in particular, this eigenvalue is real). Consequently, the analogous statement holds for the operator $M$: if the operator $M$ has the eigenvalue $\lambda$ with the maximal real part such that ${\rm Re} \, \lambda> -\alpha_1$, then $\lambda$ is real,
and moreover,  $\lambda$ is also the maximum eigenvalue for the adjoint operator $M^*$.

Let us study when such eigenvalue $ - \alpha_1 < \lambda < 0$ exists. The equations on the maximum eigenvalue $\lambda$ and the corresponding leading eigenfunction $\psi$ and $\phi$ of the operator $M$ and $M^*$ read:
\begin{equation}\label{L2-bis}
M \psi = (B + V - W)\psi = \lambda \psi, \qquad
M^* \phi = (B^* + V - W)\phi = \lambda \phi.
\end{equation}
These equations can be rewritten as follows:
\begin{equation}\label{L2-2}
\begin{array}{c}
\displaystyle
Q_\lambda \psi (x) := \frac{1}{U(x)+W(x)+\lambda} \, \int\limits_{\mathbb{T}^d} b(x,y) \psi (y) dy = \psi (x), \\[2mm]
\displaystyle
Q^*_\lambda \phi (x) := \frac{1}{U(x)+W(x)+\lambda} \, \int\limits_{\mathbb{T}^d} b(y,x) \phi(y) dy = \phi (x),
\end{array}
\end{equation}
where $U(x)= -V(x) \ge 0, \ U \not \equiv 0$.

Let us consider the second equation in \eqref{L2-2}. Since $B^*$ is a compact operator, the operator $Q^*_\mu$ is compact and positive for any $\mu> - \alpha_1$.
We study the behavior of the spectral radius $r(Q^*_\mu)$ of the operator $Q^*_\mu$ as a function of $\mu$ when $\mu>-\alpha_1$. We use arguments similar to those given in the paper \cite{KMPZ}.

First we prove that $r( Q^*_{\mu})<1$ for $\mu \ge 0$. Indeed, if it is not true and  $r( Q^*_{\mu}) \ge 1$, then by the Krein-Rutman theorem for compact positive operators (\cite{KR}, Theorem 6) the following equality should be valid:
\begin{equation}\label{L2-3}
\frac{1}{U(x) + W(x) + \mu} \, \int\limits_{\mathbb{T}^d} b(y,x) \phi(y) dy = r \, \phi(x), \quad \mbox{ with } \; r = r( Q^*_{\mu}) \ge 1, \quad \mbox{for some} \quad \phi(x) >0.
\end{equation}
This equality can be rewritten as
\begin{equation}\label{L2-4}
\int\limits_{\mathbb{T}^d} b(y,x) \phi (y) dy = r (U(x)+W(x)+\mu) \phi(x), \quad  r \ge 1, \quad \mu \ge 0,  \quad \phi(x) >0.
\end{equation}
After integrating \eqref{L2-4} over $x \in \mathbb{T}^d$ 
we obtain
\begin{equation}\label{L2-5}
\int\limits_{\mathbb{T}^d} \int\limits_{\mathbb{T}^d} b(y,x) \phi (y) \, dy \, dx = r \int\limits_{\mathbb{T}^d} (U(x)+W(x)+\mu) \phi(x) dx.
\end{equation}
Since for the expression on the right hand side of the equality \eqref{L2-5} we have the estimate
\begin{equation}\label{L2-5bis}
\begin{array}{c}
\displaystyle
r \int\limits_{\mathbb{T}^d} (U(x)+W(x)+\mu) \phi(x) dx \ge r \int\limits_{\mathbb{T}^d} \int\limits_{\mathbb{T}^d} b(x,y) \, dy \,  \phi(x) \, dx + r \int\limits_{\mathbb{T}^d} U(x) \phi(x) \, dx
\\[2mm] \displaystyle
 >   r \int\limits_{\mathbb{T}^d} \int\limits_{\mathbb{T}^d} b(x,y)   \phi(x) \, dy \, dx =   r \int\limits_{\mathbb{T}^d} \int\limits_{\mathbb{T}^d} b(y,x)   \phi(y) \, dy \, dx,
\end{array}
\end{equation}
we conclude that the equality \eqref{L2-5} cannot be true for  $\mu \ge 0, \; r \ge 1, \; v(x) >0$ and $U(x) \ge 0, \; U(x) \not \equiv 0$.

Thus, $r( Q^*_{\mu})<1$ for $\mu \ge 0$, and the same holds for  $r( Q_{\mu})$. This implies, in particular, that $\lambda$ in equation \eqref{L2-bis} should be only negative: $\lambda<0$. Therefore, we proved that the spectrum of the operator $M$ lies in the negative half-plane.
\medskip

In the next part of this section we will prove that the discrete spectrum lying above the edge $-\alpha_1$ of the essential spectrum is not empty. 
We will need the following lemmas about the properties of the spectral radius of the compact positive operator $Q_\mu$.

\begin{lemma} \label{Lemma 2}
For $\mu > -\alpha_1$, the spectral radius $r( Q_{\mu})$ is continuous and
monotonically decreasing with respect to $\mu$. Moreover, $r(Q_\mu) \to 0$ for $\mu \to +\infty$.
\end{lemma}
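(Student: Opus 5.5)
The plan is to write $Q_\mu = D_\mu B$, where $D_\mu$ is multiplication by $d_\mu(x) = (U(x)+W(x)+\mu)^{-1}$. For $\mu > -\alpha_1$ we have $U+W+\mu \ge \alpha_1+\mu>0$ a.e., so $d_\mu$ is bounded by $(\alpha_1+\mu)^{-1}$. We then treat the three claims in turn.

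\emph{Decay at infinity.} By \eqref{T1-0},
\[
r(Q_\mu)\le \|Q_\mu\| \le \|d_\mu\|_\infty\,\|B\| \le \frac{\sqrt{\gamma_2\gamma_3}}{\alpha_1+\mu},
\]
and the right-hand side tends to $0$ as $\mu\to+\infty$.

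\emph{Continuity.} For $\mu,\mu' > -\alpha_1$ the resolvent-type identity gives
\[
d_\mu - d_{\mu'} = (\mu'-\mu)\,d_\mu d_{\mu'},
\]
so $\|Q_\mu - Q_{\mu'}\| \le |\mu-\mu'|\,(\alpha_1+\mu)^{-1}(\alpha_1+\mu')^{-1}\|B\|$. Thus $\mu\mapsto Q_\mu$ is norm-continuous, even locally Lipschitz. The spectral radius is in general only upper semicontinuous in the operator, but the $Q_\mu$ are compact. For compact operators the nonzero spectrum consists of isolated eigenvalues of finite multiplicity, which depend continuously under norm perturbations (Newburgh's theorem / upper semicontinuity of finite spectral sets together with conservation of total multiplicity). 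Hence $r$ is continuous on the family of compact operators, and so $\mu\mapsto r(Q_\mu)$ is continuous. If $r(Q_\mu)=0$ at some point, continuity there follows directly from upper semicontinuity, since $r\ge0$.

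\emph{Monotonicity.} For $\mu<\mu'$ we have $0\le d_{\mu'}\le d_\mu$ pointwise. Since $B$ has a nonnegative kernel, $0\le Q_{\mu'}f\le Q_\mu f$ for every $f\ge0$, and iterating gives $0\le Q_{\mu'}^m f\le Q_\mu^m f$. The lattice norm on $L^2$ is monotone, so $\|Q_{\mu'}^m\|\le\|Q_\mu^m\|$ (the norm of a positive operator is attained on nonnegative functions, since $|Q^m f|\le Q^m|f|$). Gelfand's formula then yields $r(Q_{\mu'})\le r(Q_\mu)$.

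For strict decrease we use Krein--Rutman (\cite{KR}, Theorem 6). If $r:=r(Q_{\mu'})>0$, it comes with an eigenfunction $\psi\ge0$, $\psi\ne0$, so $Q_{\mu'}\psi=r\psi$. From \eqref{b-3}, the iterate $B^{n+1}$ has kernel bounded below by $\beta$, and hence so does a suitable power of $Q_\mu$ up to a positive constant factor, because $d_\mu\ge(\|U\|_\infty+\gamma_2+\mu)^{-1}>0$. This makes $\psi>0$ a.e. and gives
\[
Q_\mu\psi = \frac{d_\mu}{d_{\mu'}}\, r\psi \ge c\, r\psi, \qquad c=\inf\frac{U+W+\mu'}{U+W+\mu}>1,
\]
where $c>1$ uses $U+W\le\|U\|_\infty+\gamma_2$ bounded. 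A Collatz--Wielandt argument (positivity of iterates implies $Q_\mu^m\psi\ge c^m r^m\psi$) then gives $r(Q_\mu)\ge c\,r>r$.

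The main obstacle is expected to be the continuity step, specifically justifying continuity of the spectral radius for a norm-continuous family of compact operators. A fallback is to use the positivity structure: lower semicontinuity of $\mu\mapsto r(Q_\mu)$ follows from the Collatz--Wielandt characterization with the strictly positive eigenfunctions above, and combining this with general upper semicontinuity gives continuity.
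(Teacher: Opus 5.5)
Your proof is correct and follows essentially the paper's route. You use the order comparison $Q_{\mu'}\le Q_\mu$ for $\mu<\mu'$ together with Gelfand's formula for monotonicity, perturbation of the isolated top of the spectrum of the compact operators $Q_\mu$ for continuity, and the bound $\|Q_\mu\|\le\|B\|/(\alpha_1+\mu)$ for decay at infinity.

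Your version is somewhat more careful than the paper's. You make the norm continuity of $\mu\mapsto Q_\mu$ explicit, and you get continuity of $r(Q_\mu)$ without relying on simplicity of the leading eigenvalue, which the paper asserts but does not justify at that point. You also obtain strict decrease from the Krein--Rutman eigenfunction. For that step the detour through \eqref{b-3} is unnecessary, since $\psi\ge 0$, $\psi\ne 0$ already gives $r(Q_\mu)\ge c\,r(Q_{\mu'})$.
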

\begin{proof}
The spectral radius of the compact positive operator $ Q_{\mu}$ coincides with its maximum eigenvalue $\Lambda(\mu)$, which is a simple isolated eigenvalue. Thus, applying the perturbation theory we conclude that the spectral radius $r( Q_{\mu})$ is continuous in $\mu$.

To prove the monotonic decrease of the spectral radius we use the fact that if $A$ and $B$ are positive operators, such that $A \le B$ in the order sense, then $r(A) \le r(B)$. Really, for such operators we have $\|A\| \le \|B\|$ and $\|A^n\| \le \|B^n\|$ for any $n \in \mathbb{N}$. Consequently, $r(A) \le r(B)$ due to formula for the spectral radius: $r(A) = \lim\limits_{n\to \infty} \sqrt[n]{\|A^n\|}$.
The positive kernels of the operators $ Q_{\mu}$
$$
Q_\mu (x,y) = \frac{b(x,y)}{U(x)+W(x)+\mu}
$$
is monotonically decreasing function of $\mu > -\alpha_1$. Consequently,
$$
Q_\mu \le Q_{\mu'} \quad \mbox{ when } \quad \mu \ge \mu'.
$$
Thus, the spectral radius $r( Q_{\mu})$ decreases monotonically with respect to $\mu$.


Additionally, the structure \eqref{L2-2} of the operators $Q_\mu$ implies that  $r(Q_\mu) \to 0$ as $\mu \to +\infty$.
\end{proof}

\begin{lemma}\label{Lemma3}
We have
\begin{equation}\label{L3-1}
\lim\limits_{\mu \to -\alpha_1+0} r(Q_\mu) >1, \quad \mbox{ where } \quad  \alpha_1 = {\rm ess}\, \inf(W(x)+U(x)).
\end{equation}
\end{lemma}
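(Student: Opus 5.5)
The plan is a Collatz--Wielandt type lower bound for the spectral radius of the positive compact operator $Q_\mu$, applied to a suitable power of $Q_\mu$ and a test function adapted to where the denominator is small. Write $s=\mu+\alpha_1>0$ and $h(x)=U(x)+W(x)-\alpha_1\ge 0$ a.e., so that the denominator in \eqref{L2-2} is $U(x)+W(x)+\mu=h(x)+s$. By Lemma \ref{Lemma 2}, $r(Q_\mu)$ is monotone in $\mu$, so the limit in \eqref{L3-1} exists in $(0,+\infty]$. It therefore suffices to show that $r(Q_\mu)>1$ for some $\mu>-\alpha_1$ close to $-\alpha_1$.

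\textbf{Step 1 (lower bound for an iterate of $Q_\mu$).} I read condition \eqref{b-3} as a pointwise bound: the $(n+1)$-fold iterated kernel is at least $\beta$ for a.e.\ $x,y$. For $\mu$ in a bounded range, say $-\alpha_1<\mu\le 0$, every intermediate denominator satisfies $U+W+\mu\le C:=\|U\|_\infty+\gamma_2$. Keeping only the first denominator exactly, this gives for any $f\ge0$
\[
Q_\mu^{n+1}f(x)\;\ge\;\frac{1}{h(x)+s}\,C^{-n}\int\limits_{\mathbb{T}^d}\Big(\int\limits_{(\mathbb{T}^d)^n} b(x,y_1)\cdots b(y_n,y)\,dy_1\ldots dy_n\Big) f(y)\,dy\;\ge\;\frac{\beta C^{-n}}{h(x)+s}\int\limits_{\mathbb{T}^d} f(y)\,dy .
\]

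\textbf{Step 2 (test function and Collatz--Wielandt).} Choose $f_s(x)=\frac{1}{h(x)+s}$. This function is bounded and positive, so it lies in $L^2(\mathbb{T}^d)$. Step 1 yields
\[
Q_\mu^{n+1}f_s\ge \beta C^{-n} I(s)\, f_s, \qquad I(s):=\int\limits_{\mathbb{T}^d}\frac{dx}{h(x)+s}.
\]
For a positive operator, $Q^m f\ge c f$ with $f\ge 0$, $f\not\equiv0$, implies $r(Q)^m\ge c$. One sees this by iterating to $Q^{mk}f\ge c^k f$, taking norms, and applying Gelfand's formula. Hence
\[
r(Q_\mu)^{n+1}\ge \beta C^{-n} I(s).
\]
As $s\downarrow 0$, monotone convergence gives $I(s)\uparrow\int_{\mathbb{T}^d} h^{-1}\,dx$. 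So the lemma follows as soon as $\int h^{-1}=\infty$, and in particular when $\beta C^{-n}I(s)>1$ for small $s$.

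\textbf{Step 3 (the main obstacle: divergence of $I(s)$).} The crux is to show that $I(s)\to\infty$. The easy case is when $h=0$ on a set of positive measure. This happens for the operator $L$ (where $W\equiv1$) whenever $U$ vanishes on a set of positive measure, and more generally whenever the essential infimum of $U+W$ is attained on a set of positive measure. In that case $I(s)\ge \mu\{h=0\}/s\to\infty$ and we are done.

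In general I would first try to sharpen Step 1 by not discarding the intermediate denominators. Near the minimum set of $h$ they are also of size $\approx h+s$. So restricting all the variables $y_1,\ldots,y_n$ to $E_\varepsilon=\{h<\varepsilon\}$ would produce a factor of order $(\varepsilon+s)^{-(n+1)}$, at the cost of requiring a lower bound on the iterated kernel restricted to $E_\varepsilon$. Comparing this factor with $\mu(E_\varepsilon)^{n}$ is where the argument is delicate. I expect that some nondegeneracy of $h$ near its essential infimum, such as divergence of $\int h^{-1}$, is genuinely needed. Otherwise the limit in \eqref{L3-1} could be at most $1$ when $b$ is small, and in that situation I would note that the statement requires this extra hypothesis.
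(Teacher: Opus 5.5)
Your Steps 1 and 2 are correct. The only thing missing is Step 3 in general, and that gap cannot be closed: your suspicion is right, and the lemma is false under the paper's hypotheses alone.

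Here is a counterexample. Take $d=1$ and $b\equiv 1$. This is the winding of the symmetric kernel $a=\chi_{[-1/2,1/2)}$, and all of \eqref{b-2}--\eqref{b-compact} hold with $\beta=1$. Take $U(x)=10|x|^{1/2}$ on $[-\frac12,\frac12)$, extended periodically. Then $W\equiv 1$ and $\alpha_1=1$. With $s=1+\mu$, the operator $Q_\mu f=(U+s)^{-1}\int f$ has rank one, so $r(Q_\mu)=I(s)=\int\frac{dx}{U(x)+s}$. As $\mu\to-1+0$ this increases only to $\int U^{-1}=\frac{\sqrt2}{5}<1$. In the same example the eigenvalue equation for $M$ reduces to $\int\frac{dx}{1+\lambda+U(x)}=1$, which has no solution off $\sigma_{\rm ess}(M)=[-1-5\sqrt2,-1]$. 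So $M$ has no discrete spectrum at all, and the corresponding assertion of Theorem~\ref{T1} fails as well.

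The paper's proof reaches the general statement through a slip in \eqref{L3-2}. For $x\in B_\delta$ the kernel bound gives $Q_\mu^n\varphi_\delta(x)\ge\frac{\beta}{(\alpha_1+\delta+\mu)(\alpha_0+\mu)^{n-1}}\int\varphi_\delta$. But $\int\varphi_\delta=\sqrt{|B_\delta|}=|B_\delta|\,\varphi_\delta(x)$, so a factor $|B_\delta|\le 1$ has been dropped. Restoring it, the limit as $\mu\to-\alpha_1+0$ is $\beta|B_\delta|/(\delta(\alpha_0-\alpha_1)^{n-1})$. This exceeds $1$ only if $|B_\delta|/\delta$ is large for some $\delta$, which is a nondegeneracy condition on $U+W$ near its essential infimum of exactly the type you anticipated.

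Your test function $1/(h+s)$ is strictly better than the paper's indicator. Since $I(s)\ge|B_\delta|/(\delta+s)$, your bound always does at least as well as the corrected version of the paper's. It proves the lemma whenever $\beta C^{-n}\int h^{-1}>1$, and in particular whenever $\int h^{-1}=\infty$. That covers $h=0$ on a set of positive measure, and also cases like $h(x)=|x|$ in $d=1$, where $|B_\delta|/\delta$ stays bounded so the indicator cannot produce a large factor. The rank-one example shows that $I(s)$ is exactly the right quantity. The example $b\equiv K$, whose limit is $K\int h^{-1}$, shows that $\int h^{-1}=\infty$ is sufficient but not necessary. So the correct repair is an added quantitative hypothesis of this kind, as you concluded.
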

\begin{proof}

We apply the following useful statement about spectral radius of a positive operator.

\begin{lemma}[\cite{KR}, Theorem 6.2]
If $Q$ is a positive operator and if there exists a function $\varphi(x) \in L^2(\mathbb{T}^d), \ \varphi \ge 0, \ \| \varphi \|=1 $, such that
$$
Q \varphi(x) \ge c_0 \varphi(x),
$$
then
$$
r(Q) \ge c_0.
$$
\end{lemma}
\medskip

Let us take $\delta$ such that $\delta<\frac{\beta}{(\alpha_0 - \alpha_1)^{n-1}}$, where $\beta$ and $n$ were defined in condition \eqref{b-3}, and the constants $\alpha_0>\alpha_1>0$ were introduced in \eqref{essspec-b}. Denote by $B_\delta \subset \mathbb{T}^d$ a set of a positive measure such that
$$
W(x)+U(x)\le \alpha_1+\delta \quad \mbox{ for all } \quad x \in B_\delta.
$$
Let us consider function
$$
\varphi_\delta (x) = \frac{1}{\sqrt{|B_\delta|}} \chi_{B_\delta}(x),
$$
where $ \chi_{B_\delta}(\cdot)$ is a characteristic function of $B_\delta$. Then using condition \eqref{b-3} we obtain
\begin{equation}\label{L3-2}
Q^n_\mu \varphi_\delta (x) = \int Q_\mu^n (x,y) \varphi_\delta (y) dy \ge \frac{\beta}{(\alpha_1+\delta + \mu) (\alpha_0 + \mu)^{n-1}} \frac{ \chi_{B_\delta}(x)}  {\sqrt{|B_\delta|}} .
\end{equation}
Since
$$
\frac{\beta}{(\alpha_1+\delta + \mu) (\alpha_0 + \mu)^{n-1}} \ \to \ \frac{\beta}{\delta \, (\alpha_0 - \alpha_1)^{n-1}} \quad \mbox{as }\quad \mu \to -\alpha_1+0,
$$
it follows from inequality $ \frac{\beta}{\delta \, (\alpha_0 - \alpha_1)^{n-1}}>1$ that
\begin{equation}\label{L3-3}
\lim\limits_{\mu \to -\alpha_1+0} r(Q^n_\mu) >1,
\end{equation}
and consequently \eqref{L3-1} also holds.
\end{proof}

Using the monotonicity of the spectral radius with respect to $\mu$ together with relation \eqref{L3-1} we conclude, that there exists $\lambda, \ -\alpha_1< \lambda<0$, such that  $r(Q_{\lambda}) =1$.
Using the Krein-Rutman theorem again together with \eqref{L2-2} we obtain that there exist $\psi>0$ and $\phi>0$ such that the equations in \eqref{L2-bis} holds with this $\lambda$. Thus, in this case
the operator $M$ has at least one point of the discrete spectrum above the essential spectrum.
Moreover, by the construction, $\lambda $ for which $r(Q_{\lambda}) =1$ is the maximal eigenvalue of the operator $M$.
Consequently, the corresponding function $\psi(x) = \psi_\lambda(x)$ is the ground state of the operator $M$.

The uniqueness of $\psi_\lambda(x)$ in $L^2(\mathbb{T}^d)$ follows from the positivity improving property of the semigroups $e^{tM}$, see e.g. \cite{RS4}, Theorem XIII.44. This semigroup is positivity improving due to the property \eqref{b-3}.

Theorem \ref{T1} is completely proved.

\section{Proof of Theorem \ref{T2}. Estimation of a gap between 0 and $\lambda<0$}\label{S2}

Our next task is to estimate the distance from 0 to the spectrum of the operator $L$, or equivalently, the maximum eigenvalue $\lambda$ of $L$, which is real and negative according to Theorem \ref{T1}.

I. Let us assume first that $a(-z) = a(z)$ is a symmetric function.  We rewrite operator $L$ as $L=P+V$. where we denoted $P = A-E$, and operator $A$ is the convolution operator: $Au = \tilde a \ast u$, $\|\tilde a\|_{L^1(\mathbb{T}^d)} =1$.
We have
\begin{equation}\label{A-1}
(Pf,f) \le 0, \ (Vf, f) \le 0, \ P1=0, \ (V1,1) =\int_{T^d} V(\xi) d \xi = - \|V\|_1 =  - c_1,
\end{equation}
where $\| \cdot \|_1 = \|\cdot \|_{L^1(\mathbb{T}^d)}$.
Let us remind that $$\sigma_{{\rm ess}}(L) = { \mathfrak{Ran}} (V - 1) = [-\nu, -1], \quad -\nu = {\rm ess} \inf V -1<-1.$$

Since $A$ is a compact operator in $L^2(\mathbb{T}^d)$, then the operator $P=A-E$ has a discrete spectrum, and the maximum eigenvalue is equal to 0. Moreover, by the Krein-Rutman theorem, $1 \in L^2(\mathbb{T}^d)$ is the unique eigenfunction of $P$ corresponding to 0, since there exists a $n$-th iteration of the kernel $\tilde a(\cdot)$ which is strictly positive on $\mathbb{T}^d$, see e.g. Lemma 4.2 in \cite{PZh19}.

Consequently, $L^2(\mathbb{T}^d)$ is decomposed into a direct sum
\begin{equation}\label{sum}
L^2(\mathbb{T}^d) =  \{ 1 \} \, \oplus \, {\cal H}^\bot
\end{equation}
of subspaces invariant with respect to $P=A-E$, and since the maximal accumulation point of the discrete spectrum is $-1$, we get that
\begin{equation}\label{A-2}
(Pg,g) \le -c_2 (g,g), \quad \forall \; g \in {\cal H}^\bot.
\end{equation}
for some $0<c_2 \le 1$.

In what follows we will denote $\| \cdot \| = \| \cdot \|_{L^2(\mathbb{T}^d)} $. We will prove that there exists $\kappa>0$, such that $((P+V)f, f) \le - \kappa<0$ for all $f \in L_2(T^d), \ \|f\|=1$. To do this, we write  $f$ with $\|f \|=1$ as follows:
\begin{equation}\label{f-1}
f= f_1+f_2, \quad f_1 = \alpha \, 1, \ f_2 = \gamma \tilde f_2, \quad \tilde f_2 \in {\cal H}^\bot, \ \|\tilde f_2\|=1, \ \|f_1\| = \alpha, \ \|f_2\| = \gamma, \  \alpha^2 + \gamma^2 =1.
\end{equation}
It follows from \eqref{A-1} - \eqref{A-2} that
\begin{equation}\label{AV-2}
((P + V)f,f) = (P(f_1 + f_2), f_1+ f_2)+(Vf,f) \le (P f_2, f_2) \le - c_2 \gamma^2.
\end{equation}
We will apply estimate \eqref{AV-2} below with  $\gamma \ge \gamma_0>0$, where   $\gamma_0 \le \frac14$ will be chosen later.

Taking into account $(Pf,f) \le 0$ we have
  \begin{equation}\label{AV-3}
  \begin{array}{c}
((P+V)f,f) \le (V(f_1 + f_2), f_1+ f_2) \le (V f_1, f_1) + 2(V f_1, f_2) \\[2mm]
 \le - c_1 \alpha^2 + 2  \|V\| \alpha  \gamma = -c_1 (1- \gamma^2) + 2 \|V\| \gamma \sqrt{1- \gamma^2} \\[2mm] = - c_1 + \gamma^2 c_1 + 2 \| V \| \gamma \sqrt{1 - \gamma^2}
< - c_1 + \frac14 \gamma c_1 + 2 \| V\| \gamma (1 - \frac12 \gamma^2)\\[2mm]
 < - c_1 + \gamma \big( \frac14 c_1 +2 \|V\|  \big) < - \frac12 c_1,
\end{array}
\end{equation}
when
 \begin{equation}\label{gamma}
\gamma <  \frac{2 c_1}{c_1 + 8 \|V\|} =: \tilde \gamma_0.
\end{equation}
We used in \eqref{AV-3} the following evident estimates
$$
\sqrt{1- \gamma^2} < 1-\frac12 \gamma^2, \quad \gamma^2< \frac14 \gamma \quad \mbox{ for  } \; \gamma< \frac14.
$$
Since $c_1 = \|V\|_1 \le \|V\|$, then \eqref{gamma} yields 
 \begin{equation}\label{gamma-1}
\frac29 \, \frac{\|V\|_1}{\|V\|}  \le \tilde \gamma_0 \le \frac29.
\end{equation}
Therefore, taking $\gamma_0 = \frac29 \, \frac{\|V\|_1}{\|V\|}$ we get
   \begin{equation}\label{AV-4}
   \begin{array}{c}
   ((P+V)f,f) \le -c_2 \gamma_0^2 \|f\|^2 \ \mbox{ as } \ \|f_2\|\ge \gamma_0 \|f\| \\[3mm]
   ((P+V)f,f) \le -\frac12 c_1 \|f\|^2 \ \mbox{ as } \ \|f_2\|\le \gamma_0 \|f\|.
   \end{array}
   \end{equation}
 Thus,
   \begin{equation}\label{AV-5}
    ((P+V)f,f) \le - \min \big\{ c_2 \gamma_0^2, \ \frac12 c_1  \big\} \,  \|f\|^2
 \end{equation}
 and $\kappa = \min \big\{ c_2 \gamma_0^2, \ \frac12 c_1  \big\}$.
\\
\begin{remark}  The analogous bound can be obtained in the same way for the operator
$$
(L_\mu + V)u (x) = \int_{R^d} a(x-y)\mu(x,y) (u(y) - u(x)) dy + V(x)u(x), \quad u \in L^2(R^d),
$$
where $\mu(x,y) = \mu(y,x)$ is a positive symmetric periodic function satisfying the estimate  $0< \mu_- \le \mu(x,y) \le \mu_+ < \infty$.
\end{remark}

II. Now we do not assume  that $a(\cdot)$ is a symmetric function. Then the operator $L$ is not symmetric anymore, and the minimax principle is not applicable in this case.
However, the decomposition \eqref{sum} of $L^2(\mathbb{T}^d)$ into an orthogonal sum of subspaces invariant under $P=A-E$ remains valid for any convolution kernel $\tilde a(\cdot)$. Indeed,
$$
P1=0, \quad (Pg,1)= \int \int \tilde a(x-y) g(y) dy dx - (g,1) = 0 \; \mbox{ for all } \;  g \in   {\cal H}^\bot.
$$
Using the Fourier transform and the Parseval's identity we conclude that the estimate \eqref{A-2} holds with a constant $-c_2 = \max\limits_{k \in \mathbb{Z}^d, \, k \neq 0} |a_k| \,- \, 1<0$, where $a_k$ are the Fourier coefficients of the function $\tilde a(\cdot)$. Then, applying the same reasoning as above we conclude that ${\rm Re} (Lf,f)\le -\kappa \|f\|^2$ for any $f \in L^2(\mathbb{T}^d)$. Comparing this inequality with the equation on the leading eigenfunction $L \psi = \lambda \psi$ we get
\begin{equation}
(L\psi,\psi) = \lambda (\psi,\psi) \le -\kappa\|\psi\|^2.
\end{equation}
Since the maximal eigenvalue $\lambda$ of $L$ is real by Theorem \ref{T1}, we obtain that $\lambda \le -\kappa$, where $\kappa$ is defined as above in \eqref{AV-5}.



\begin{thebibliography}{20}


\bibitem{BPZ22} 
    D.I. Borisov, E.A. Zhizhina, A.L. Piatnitski, Spectrum of a convolution operator with potential, Russian Math. Surveys (2022), 77(3), 546-548.
    DOI 10.4213/rm10038.

\bibitem{BPZ23a} Denis I. Borisov, Andrey L. Piatnitski, Elena A. Zhizhina, On the spectrum of convolution operators with a potential, JMAA (Journal of mathematical analysis and applications), 2023, Vol. 517(1), 126568; https://doi.org/10.1016/j.jmaa.2022.126568

\bibitem{BPZ23b} D.I. Borisov, A.L. Piatnitski and E.A. Zhizhina, Spectrum of One-Dimensional Potential Perturbed by a Small Convolution Operator: General Structure, Mathematics (2023) 11(19), 4042  https://doi.org/10.3390/math11194042

\bibitem{BPZ25} D.I. Borisov, A.L. Piatnitski and E.A. Zhizhina, Convolution type operators with potential: essential and infinite discrete spectrum, MMAS (Mathematical Methods in the Applied Sciences) Vol. 48, Issue 8, pp. 8687-8695, (2025), http://doi.org/10.1002/mma.10745

\bibitem{FK} D. Finkelshtein, Yu. Kondratiev, O. Kutoviy, S. Molchanov, E. Zhizhina,
Density behavior of spatial birth-and-death stochastic evolution of mutating genotypes
under selection rates,
Russian Journal of Math. Physics, 2014, vol. 21, No 4, pp. 450-459.

\bibitem{S} D.E. Edmunds, A.J.B. Potter, C.A. Stuart, Non-compact positive operators, Proc. R. Soc. Lond. A, 328, 67-81 (1972)

\bibitem{KKP} Yu. Kondratiev, O. Kutoviy, S. Pirogov, Correlation functions
and invariant measures in continuous contact model, Ininite
Dimensional Analysis, Quantum Probability and Related Topics Vol.
11, No. 2, 231-258 (2008)


\bibitem{KMPZ} Yu. G. Kondratiev, S. Molchanov, S. Pirogov, E. Zhizhina, On ground state of some non local Schrodinger operator, Applicable Analysis, 96 (8), 2017,  pp. 1390-1400, http://dx.doi.org/10.1080/00036811.2016.1192138.

\bibitem{KMV} Yu. Kondratiev, S. Molchanov, B. Vainberg, Spectral analysis of non-local Schrodinger operators,
J. Funct. Anal., 273(3) 2017, pp. 1020–1048.

\bibitem{KPZ} Yu. Kondratiev, S. Pirogov, E. Zhizhina, A Quasispecies Continuous Contact Model
in a Critical Regime, Journal of Statistical Physics, 163(2), 357-373 (2016), doi:10.1007/s10955-016-1480-5


\bibitem{KR} Krein, M.G.; Rutman, M.A., Linear operators leaving
invariant a cone in a Banach space, Uspehi Matem. Nauk (in Russian)
3, p. 3-95 (1948). English translation: Krein, M.G.; Rutman, M.A.,
Linear operators leaving invariant a cone in a Banach space, Amer.
Math. Soc. Translation 26 (1950).


\bibitem{PZh19}  A. Piatnitski, E. Zhizhina, Homogenization of  biased convolution type operators,
Asymptotic Analysis, 2019, Vol. 115, No 3-4, pp. 241-262, doi:10.3233/ASY-191533.

\bibitem{PZ25}  S. Pirogov, E. Zhizhina, On existence of invariant measures of a continuous contact model in a periodic medium, SIAM Theory Probab. Appl., Vol.70, N 4, 2026.


\bibitem{RS4} M. Reed, B. Simon, Methods of modern mathematical physics, Vol.4, Academic Press, NY 1978



\end{thebibliography}
\end{document}